\newtheorem{lemma}{Lemma}[section]
\newtheorem{theorem}[lemma]{Theorem}
\newtheorem{claim}[]{\noindent Claim}[section]
\newtheorem{conjecture}[lemma]{Conjecture}
\begin{document}

\title{Seymour and Woodall's conjecture holds for graphs with independence number two}

        \author[1]{Rong Chen\footnote{Email: rongchen@fzu.edu.cn (Rong Chen).}}
		\author[1]{Zijian Deng\footnote{Email: zj1329205716@163.com.(corresponding author).}}

\affil[1]{Center for Discrete Mathematics, Fuzhou University\\

Fuzhou, People's Republic of China}

	\date{}
	\maketitle
	\begin{abstract}
Woodall (and Seymour independently) in 2001 proposed a conjecture that every graph $G$ contains every complete bipartite graph on $\chi(G)$ vertices as a minor, where $\chi(G)$ is the chromatic number of $G$. In this paper, we prove that for each positive integer $\ell$ with $2\ell \leq \chi(G)$, each graph $G$ with independence number two contains a $K^{\ell}_{\ell,\chi(G)- \ell}$-minor, implying that Seymour and Woodall's conjecture holds for graphs with independence number two, where $K^{\ell}_{\ell,\chi(G)- \ell}$ is the graph obtained from  $K_{\ell,\chi(G)- \ell}$ by making every pair of vertices on the side of the bipartition of size $\ell$ adjacent.


	\end{abstract}
	
	\textbf{Mathematics Subject Classification}: 05C15; 05C83
	
	\textbf{Keywords}: Hadwiger's conjecture; minor; independence number


\section{\bf Introduction}
All graphs considered in this paper are finite and simple. 
For a graph $G$, we use $\chi(G)$, $\omega(G)$ and $\Delta(G)$ to denote the chromatic number, clique number and maximum degree of $G$, respectively.
An \emph{independent set} 
is a subset of $V(G)$ that are pairwise nonadjacent. Let $\alpha(G)$ be the \emph{independence number} of $G$, that is the maximum size of independent sets. Let $K_{n}$ be a clique on $n$ vertices, $P_{n}$ a path on $n$ vertices. 
If a graph isomorphic to $H$ can be obtained from a graph $G$ by deleting vertices or edges and contracting edges, then $H$ is a \emph{minor} of $G$, written by $G\succeq_m H$. 

In 1943, Hadwiger \cite{HU} proposed the following famous conjecture.

\begin{conjecture}{\rm(Hadwiger's conjecture)}\label{conj1}
 Every graph $G$ contains $K_{\chi(G)}$ as a minor.
\end{conjecture}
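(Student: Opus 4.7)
Since the statement is Hadwiger's conjecture in full generality, the plan would be to attack it via a minimum counterexample argument. Suppose for contradiction that $G$ is a counterexample with $|V(G)|$ minimum, and write $k = \chi(G)$. By minimality $G$ is $k$-vertex-critical, so $\delta(G) \geq k-1$ and (by Dirac's theorem on critical graphs) $G$ is $(k-1)$-edge-connected. Moreover $G$ is contraction-critical: for every edge $uv$, the contraction $G/uv$ must have chromatic number at most $k-1$, since a $K_k$-minor in $G/uv$ would lift to one in $G$. Mader-type consequences of these two forms of criticality give lower bounds on local density, in particular on $|E(G[N(v)])|$ at a vertex of small degree, which restrict the possible local shape of $G$.

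Building on this, I would try to construct the $K_k$-minor greedily. The first attempt is to pick a vertex $v$ of minimum degree and to show that $G[N(v)]$ contains a $K_{k-1}$-minor whose $k-1$ branch sets all meet $N(v)$; adjoining the singleton $\{v\}$ as an extra branch set would then yield $K_k$. When this fails (and in general it does), one is forced into a more delicate argument: either apply Menger's theorem across a vertex cut to link branch sets on the two sides, or derive the minor directly from a chromatic-number inequality of the form $\chi(G) \leq \chi(G_1) + \chi(G_2) - 1$ after cutting $G$ along a clique separator.

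The main obstacle is that this is the Hadwiger conjecture itself, which is notoriously open for $k \geq 7$: only the cases $k \leq 5$ are known via Wagner's reduction to the Four Color Theorem, and $k = 6$ via Robertson, Seymour and Thomas, with no uniform technique bridging to larger $k$. Any realistic plan must therefore specialize the graph class. The natural setting suggested by the abstract is to impose $\alpha(G) = 2$, so that $\overline{G}$ is triangle-free and Ramsey-type constraints force every pair of non-adjacent vertices to have a common neighborhood containing essentially all of $G$. In that regime a structured greedy construction has purchase, and one can hope to build a much more specific minor, namely a $K^{\ell}_{\ell,\chi(G)-\ell}$ with $\ell = \lfloor \chi(G)/2 \rfloor$, which contains $K_{\chi(G)}$ as a minor and thereby verifies the conjecture for this class. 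The real work will be setting up the iterative branch-set construction so that the chromatic number of the ``remaining'' subgraph decreases by exactly one at each step, while the clique on the $\ell$-side grows and every new independent branch set picks up edges to all previously constructed branch sets.
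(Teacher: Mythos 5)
There is a genuine gap here, and it is the whole statement: what you have written is not a proof of Conjecture \ref{conj1} but a candid admission that no proof is available. The statement you were asked about is Hadwiger's conjecture in full generality, which the paper does not prove and does not claim to prove --- it is stated as a conjecture, and the paper's actual contribution is Theorem \ref{thm4}, a result restricted to graphs with $\alpha(G)\leq 2$ (together with the reduction of Theorem \ref{thm-last}). Your opening moves (minimum counterexample, $k$-vertex-criticality, $\delta(G)\geq k-1$, Dirac's $(k-1)$-edge-connectivity, contraction-criticality, Mader-type density bounds) are all correct standard facts, but they are exactly the facts that have been known for decades without yielding the conjecture for any $k\geq 7$; the step ``show that $G[N(v)]$ contains a $K_{k-1}$-minor with all branch sets meeting $N(v)$'' is precisely the step that fails and for which no substitute is known. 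Your own text concedes this (``the main obstacle is that this is the Hadwiger conjecture itself'').

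The subsequent pivot to $\alpha(G)=2$ changes the statement being proved: under that hypothesis you would be proving (a strengthening of) Conjecture \ref{conj2}, not Conjecture \ref{conj1}. Note also that even in the $\alpha(G)=2$ regime the paper does not obtain a $K_{\chi(G)}$-minor; it obtains a $K^{\ell}_{\ell,\chi(G)-\ell}$-minor, and $K^{\ell}_{\ell,\chi(G)-\ell}$ with $\ell=\lfloor\chi(G)/2\rfloor$ does \emph{not} contain $K_{\chi(G)}$ as a minor (it has only $\chi(G)$ vertices and is missing all edges inside the independent side), so your claim that this ``contains $K_{\chi(G)}$ as a minor and thereby verifies the conjecture for this class'' is false; it verifies only the weaker Conjecture \ref{conj1w} for this class. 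In short: no proof of the stated conjecture exists in the paper or in your proposal, and the fallback you describe proves a strictly weaker statement for a restricted class.
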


Hadwiger's conjecture has been proven to be true only for graphs with chromatic number at most 6, but is likely difficult to prove for larger values, given that proofs for the cases of $\chi(G)=5$ and $\chi(G)=6$ already depend on the Four Color Theorem. Hadwiger's conjecture remains unsolved for a specific class of graphs, namely those with an independence number of at most two.


\begin{conjecture}\label{conj2}
Every graph $G$ with $\alpha(G)\leq2$ contains $K_{\chi(G)}$ as a minor.
\end{conjecture}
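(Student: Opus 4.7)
The natural approach is to argue via a minimum counterexample: take $G$ with $\alpha(G)\leq 2$ and $\chi(G)=k$ minimal such that $G$ does not contain a $K_k$-minor. The complement $\bar G$ is triangle-free, which is a strong structural constraint I would want to exploit. Recall that a $K_k$-minor corresponds to a partition of $V(G)$ into nonempty branch sets $B_1,\dots,B_k$ such that each induced subgraph $G[B_i]$ is connected and every pair $(B_i,B_j)$ admits at least one edge of $G$ between them; the task is to produce such a partition.

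The first substantive step I would take is to invoke the paper's main theorem (quoted in the abstract) with $\ell=\lfloor k/2\rfloor$, yielding a $K^{\ell}_{\ell,k-\ell}$-minor. This delivers branch sets $A_1,\dots,A_\ell,B_1,\dots,B_{k-\ell}$ in which $\{A_1,\dots,A_\ell\}$ are pairwise adjacent, each $B_j$ is adjacent to every $A_i$, and the only possibly missing adjacencies are between pairs $(B_j,B_{j'})$. I would then attempt an exchange argument: if some $B_j,B_{j'}$ are non-adjacent, pick $u\in B_j$ and $v\in B_{j'}$; since $\{u,v\}$ is independent and $\alpha(G)\leq 2$, every vertex of $G-\{u,v\}$ is adjacent to $u$ or to $v$. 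This density should allow rerouting a vertex from some $A_i$ into $B_j$ or $B_{j'}$, or swapping a vertex between the two $B$-parts, in order to insert the missing edge while preserving connectedness of all branch sets and the adjacencies already present.

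The step I expect to be the main obstacle is precisely this rearrangement. Conjecture~\ref{conj2} is the Plummer--Stiebitz--Toft problem and is widely regarded as the hardest remaining case of Hadwiger's conjecture; no exchange argument of the above flavour has ever been carried through in the literature. The difficulty is that repairing one non-adjacent pair $(B_j,B_{j'})$ can destroy the adjacency of another pair or disconnect a branch set, and there seems to be no mechanism forcing all repairs to succeed simultaneously. To make real progress I would expect to need either a refinement of the main theorem that already controls the cross-edges between $B$-parts (for instance producing the second side as a union of highly connected, pairwise touching subgraphs), or additional structural information about triangle-free graphs of large chromatic number pulled back to $\bar G$, rather than treating the $K^{\ell}_{\ell,k-\ell}$-minor result as a black box.
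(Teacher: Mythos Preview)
The statement you are addressing is labelled \emph{Conjecture} in the paper, and for good reason: the paper does not prove it. What the paper actually establishes is the weaker Theorem~\ref{thm4}, namely that every graph $G$ with $\alpha(G)\leq 2$ contains a $K^{\ell}_{\ell,\chi(G)-\ell}$-minor for each $2\ell\leq\chi(G)$. This is precisely the input you propose to use as a black box in your first step, so you cannot be ``comparing'' your argument to a proof in the paper --- there is none.

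Your proposal is also not a proof on its own terms, and you say so yourself. The exchange argument you sketch --- reroute vertices among the $B$-parts of a $K^{\ell}_{\ell,k-\ell}$-minor to create the missing $B_j$--$B_{j'}$ adjacencies --- is exactly the step that does not go through. Fixing one non-adjacent pair can disconnect a branch set or destroy another adjacency, and nothing in the hypothesis $\alpha(G)\leq 2$ forces the repairs to terminate. You correctly identify this as the obstruction and note that no such argument exists in the literature; that is the genuine gap, and it is the reason Conjecture~\ref{conj2} remains open. Your write-up is an honest assessment of where the difficulty lies, but it is a proof \emph{sketch that fails}, not a proof, and it should not be presented as one.
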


The significance of this class of graphs with independence number two was first pointed out by Duchet and Meyniel \cite{PH} and Mader independently, and later highlighted in Seymour's survey paper \cite{P}.
According to Seymour in \cite{P}, 
Conjecture \ref{conj2} could be key to the Hadwiger's conjecture. He suggested that if the conjecture is valid for these graphs, it might be valid more broadly.
Plummer, Stiebitz, and Toft \cite{MM} showed that, Conjecture \ref{conj2} can be restated as follows.

\begin{conjecture}\label{conj3}
Every $n$-vertex graph $G$ with $\alpha(G)\leq2$ contains $K_{\lceil \frac{n}{2}\rceil}$ as a minor.
\end{conjecture}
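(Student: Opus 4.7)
The plan is to pass to the complement graph $\overline G$, which is triangle-free since $\alpha(G)\le 2$. Let $M=\{u_iv_i:1\le i\le m\}$ be a maximum matching of $\overline G$ and set $U=V(G)\setminus V(M)$. By the maximality of $M$, $U$ is independent in $\overline G$, hence induces a clique in $G$; since $|U|=n-2m$, we have $\lceil n/2\rceil=m+\lceil|U|/2\rceil$. The target is therefore to produce $m$ connected, pairwise-adjacent branch sets built around the matching edges of $M$, together with $\lceil|U|/2\rceil$ branch sets coming from the clique $U$.

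The clique side is immediate: pair up vertices of $U$ arbitrarily (leaving one singleton if $|U|$ is odd) to obtain $\lceil|U|/2\rceil$ trivially connected branch sets $C_1,\ldots,C_{\lceil|U|/2\rceil}$ that are pairwise adjacent because $U$ is a clique of $G$. The harder task is to build, for each $i$, a connected branch set $B_i\supseteq\{u_i,v_i\}$ such that the $B_i$'s are pairwise adjacent and disjoint from each $C_j$. Since $u_iv_i\notin E(G)$, each $B_i$ must pick up at least one ``connector'' --- ideally a single $w_i$ adjacent in $G$ to both $u_i$ and $v_i$, and failing that a two-vertex bridge $w_i\in N_G(u_i)$, $w_i'\in N_G(v_i)$ with $w_iw_i'\in E(G)$. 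Triangle-freeness of $\overline G$ guarantees that every vertex outside $\{u_i,v_i\}$ is $G$-adjacent to $u_i$ or $v_i$, which gives a rich pool of candidates; the joint selection of connectors staying disjoint from the $C_j$'s and from each other can be attempted via a system-of-distinct-representatives argument on a suitable bipartite auxiliary graph and Hall's theorem.

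The main obstacle will be mutual adjacency between the $B_i$'s. Between $B_i$ and $B_j$, triangle-freeness of $\overline G$ already gives local constraints: for instance $\{u_i,v_i,u_j\}$ is not independent in $G$, so $u_j$ is $G$-adjacent to $u_i$ or $v_i$, and the same reasoning applies to $v_j$. For many pairs these local forcings will produce a $B_i$--$B_j$ edge automatically, but promoting them to a \emph{simultaneous} global choice of connectors is exactly what has kept Conjecture~\ref{conj3} open. I expect any successful proof to move beyond the rigid ``one connector per matched edge'' template sketched above --- perhaps by iteratively augmenting $M$ along carefully chosen alternating paths in $\overline G$, or by invoking density-based structural results on triangle-free graphs --- in order to control all branch sets simultaneously.
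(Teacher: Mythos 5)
You are attempting to prove a statement that the paper itself does not prove: Conjecture~\ref{conj3} is presented as an open conjecture (the Plummer--Stiebitz--Toft reformulation of Hadwiger's conjecture for graphs with independence number two), and the paper only establishes the strictly weaker Theorem~\ref{thm4}, which replaces the clique minor by a $K^{\ell}_{\ell,\chi(G)-\ell}$-minor. So there is no proof in the paper to compare against, and your proposal --- as you yourself concede in the final paragraph --- does not close the gap either. A sketch that ends with ``the joint selection of connectors \dots can be attempted via Hall's theorem'' and ``I expect any successful proof to move beyond this template'' is a research plan, not a proof.

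Beyond the admitted incompleteness, there is a concrete counting obstruction in the framework you set up that is worth naming. You have $n=2m+|U|$ exactly, so after reserving the $2m$ matched vertices and the $|U|$ clique vertices there are \emph{no} vertices left to serve as connectors. Each branch set $B_i\supseteq\{u_i,v_i\}$ must contain at least one additional vertex to be connected (since $u_iv_i\notin E(G)$), and that vertex must be stolen either from $U$ (costing you clique-side branch sets) or from another matched pair (destroying that pair). Stealing one connector per pair from $U$ already drops the total to roughly $m+\lceil(|U|-m)/2\rceil$, and in the extreme case $U=\emptyset$ you can salvage at most about $2m/3$ branch sets --- which is exactly the $\lceil n/3\rceil$ bound of Duchet--Meyniel and Chudnovsky--Seymour, not $\lceil n/2\rceil$. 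The one genuinely correct observation in your sketch is that pairwise adjacency between branch sets each containing a non-edge is automatic when $\alpha(G)\leq2$ (for a non-edge $u_iv_i$ and any vertex $w$ outside it, $\{u_i,v_i,w\}$ cannot be independent); the open problem is entirely about connectivity and the vertex budget, and your proposal does not address that.
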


Chudnovsky in \cite{M} showed that every $n$-vertex graph $G$ with an independence number $\alpha(G)\leq2$ contains a $K_{\lceil \frac{n}{3}\rceil}$-minor, which is also a direct consequence of Duchet and Meyniel's result in \cite{PH}. This result provides strong evidence for Conjecture \ref{conj3}.
Recently, Botler et al. \cite{FA} proposed a conjecture that is slightly weaker than Conjecture \ref{conj3}.

\begin{conjecture}\label{conj4}{\rm(\cite{FA})}
Let $G$ be an $n$-vertex graph with $\alpha(G)\leq2$. For any positive integer $\ell$ with $\ell < \lceil\frac{n}{2}\rceil$, we have $G \succeq_{m} K_{\ell,\lceil\frac{n}{2}\rceil- \ell}$.
\end{conjecture}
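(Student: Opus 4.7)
My plan is to induct on $n$, using the key structural property that $\alpha(G)\le 2$ is equivalent to $\overline{G}$ being triangle-free. Mantel's theorem then gives $|E(\overline{G})|\le\lfloor n^2/4\rfloor$, so some vertex $v\in V(G)$ satisfies $\deg_G(v)\ge\lceil n/2\rceil-1$; moreover, for any such $v$, the non-neighbourhood $C:=V(G)\setminus N[v]$ is a clique in $G$, because two non-neighbours of $v$ together with $v$ would form an independent triple. The boundary cases $\ell=1$ and $\ell=\lceil n/2\rceil-1$ are then immediate, since a single vertex of degree $\ge\lceil n/2\rceil-1$ realises $K_{1,\lceil n/2\rceil-1}$ as a minor.

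For the inductive step I would first dispose of the easy sub-case in which a maximum-degree vertex $v$ is universal ($C=\emptyset$). Applying the inductive hypothesis to $G-v$ with parameter $\ell-1$ yields a $K_{\ell-1,m}$-minor with $m\ge\lceil (n-1)/2\rceil-(\ell-1)\ge\lceil n/2\rceil-\ell$, and adjoining $\{v\}$ as the $\ell$-th left branch-set completes the construction because $v$ is adjacent to every other vertex.

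The substantive sub-case is $|C|\ge 1$. Here I would analyse a maximum matching $M$ of the triangle-free graph $\overline{G}$: the $n-2|M|$ vertices uncovered by $M$ form a clique $K\supseteq C$ in $G$, and $|K|+|M|=n-|M|\ge\lceil n/2\rceil$, so there is at least enough raw material for $\lceil n/2\rceil$ branch-sets. Crucially, the triangle-freeness of $\overline{G}$ forces, for every matched anti-edge $\{x_i,y_i\}\in M$, every other vertex of $G$ to be $G$-adjacent to at least one of $x_i,y_i$. I would then build branch-sets of two types---singleton clique vertices taken from $K$, and ``absorbed'' anti-edge pairs in which $x_i$ and $y_i$ are glued together through a common $G$-neighbour into a connected subgraph---designate $\ell$ of them as the left side and the remaining $\lceil n/2\rceil-\ell$ as the right side, and verify all left-right adjacencies using the abundant edges between $K$ and $V(M)$.

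The main obstacle will be guaranteeing disjointness and connectivity simultaneously when $|K|$ is small and $|M|$ is close to $\lfloor n/2\rfloor$: each anti-edge's bridging vertex must not already be committed to another branch-set. I expect to handle this either by strengthening the induction hypothesis to prescribe a specific vertex as the first left branch-set (so the pivot $v$ threads cleanly through the recursion), or by a Gallai--Edmonds-style selection in $\overline{G}$ that chooses all bridging vertices simultaneously and disjointly; the first route should mesh more naturally with the induction on $n$ outlined above, and is what I would try first.
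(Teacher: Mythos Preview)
Your base case and the universal-vertex sub-case are fine, but the ``substantive sub-case'' is where the entire difficulty of the problem lives, and your sketch does not close it. What you call an ``absorbed anti-edge pair'' --- two non-adjacent vertices $x_i,y_i$ glued through a common $G$-neighbour --- is precisely an induced $P_3$ (a \emph{seagull}), and your construction requires $\ell$ pairwise disjoint seagulls together with $\lceil n/2\rceil-\ell$ further vertices outside them. Finding $\ell$ disjoint seagulls in a graph with $\alpha\le 2$ is a hard packing problem; it is \emph{not} a consequence of Mantel's theorem or of a maximum anti-matching, and neither a strengthened induction hypothesis nor a Gallai--Edmonds argument on $\overline{G}$ will produce the bridging vertices disjointly in general. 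Concretely, when $\overline{G}$ has a perfect matching (e.g.\ $G=\overline{P}$ for the Petersen graph $P$) your clique $K$ is empty and every bridging vertex you pick for one anti-edge is already committed to another anti-edge, so the scheme as written cannot even get off the ground.

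The paper obtains the disjoint $P_3$'s by invoking the Chudnovsky--Seymour seagull-packing theorem, but only after first applying Blasiak's theorem (to assume $\kappa(G)\ge\lceil n/2\rceil$, else $G\succeq_m K_{\lceil n/2\rceil}$ outright) and a clique-number bound of Chudnovsky--Seymour (to assume $\omega(G)<(n+3)/4$); these hypotheses are exactly what the packing theorem needs. Even then there is a boundary case your outline does not anticipate: when $n=4\ell-1$ one has $n-3\ell<\lceil n/2\rceil-\ell$, so $\ell$ seagulls consume too many vertices, and the paper must manufacture one ``cheap'' branch set of size two (an edge $uv$ with $|V(G)\setminus N[\{u,v\}]|\le\ell-1$) before packing only $\ell-1$ seagulls in $G\setminus\{u,v\}$. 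Your plan contains the right intuition --- contracting a subgraph that spans a non-edge yields a universal vertex --- but it is missing both the packing machinery and the boundary analysis that turn that intuition into a proof.
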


In fact, Woodall \cite{W} in 2001 (and independently Seymour \cite{AN}) proposed a conjecture that is slightly weaker than Hadwiger's conjecture and stronger than Conjecture \ref{conj4}.

\begin{conjecture}{\rm(\cite{W,AN})}\label{conj1w}
For any positive integer $\ell$ with $\ell< \chi(G)$, a graph $G$ contains a $K_{\ell, \chi(G)-\ell}$-minor.
\end{conjecture}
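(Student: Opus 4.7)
The plan is to prove the stronger statement advertised in the abstract --- that every graph $G$ with $\alpha(G)\le 2$ contains $K^{\ell}_{\ell,k-\ell}$ as a minor whenever $2\ell\le k:=\chi(G)$. This immediately yields Conjecture \ref{conj1w} in the range $\alpha(G)\le 2$ by the symmetry $K_{\ell,k-\ell}\cong K_{k-\ell,\ell}$. I would take $G$ to be a counterexample minimizing $|V(G)|$, then $|E(G)|$, so that $G$ is $k$-critical with $\delta(G)\ge k-1$; the complement $\bar G$ is then triangle-free.

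Fix a maximum matching $M=\{u_1v_1,\dots,u_\mu v_\mu\}$ of $\bar G$. The matched pairs together with singletons form a proper $(n-\mu)$-coloring of $G$, so criticality forces $k=n-\mu$, and the unmatched set $U:=V(G)\setminus V(M)$ is independent in $\bar G$, hence a clique of $G$ of size $2k-n$. A convenient consequence of $\alpha(G)\le 2$ is that for every $i$ and every $c\notin\{u_i,v_i\}$ the triple $\{c,u_i,v_i\}$ cannot be independent, so $c$ is adjacent in $G$ to at least one of $u_i,v_i$. I would then build the minor directly. Choose an $\ell$-clique $C$ of $G$ (naturally $C\subseteq U$ when $|U|\ge\ell$) to supply the $\ell$ singleton left-side branch sets. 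Use the $|U|-\ell$ leftover vertices of $U\setminus C$ as singleton right-side branch sets, and partition $V(M)$ --- possibly with a few vertices of $U\setminus C$ used as connectors --- into the remaining $\mu=k-\ell-(|U|-\ell)$ connected right-side branch sets, each of size two or three. Any right-side branch set that contains a full matched pair is automatically adjacent to every $c\in C$ by the observation above; the clique condition on $C$ and the adjacencies within $U$ are immediate.

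The main obstacle is precisely this packing step: partitioning $V(M)$ into $\mu$ connected subgraphs of $G$ that each meet the neighborhood $N_G(c)$ for every $c\in C$. I would formulate the existence of such a partition as an SDR/matching condition in an auxiliary bipartite graph whose parts are $M$ and a pool of candidate connectors in $V(G)\setminus C$, with each $\{u_i,v_i\}$ joined to every common neighbor of $u_i,v_i$. Verifying Hall's condition should exploit the triangle-freeness of $\bar G$ together with $\delta(G)\ge k-1$: any violator $S\subseteq M$ would force many vertices to share a large common non-neighborhood inside $V(M)$, creating either a triangle in $\bar G$ or an independent triple in $G$. The regime $|U|<\ell$ is the hardest, since $C$ must then incorporate endpoints of matched pairs; I expect to handle it by a greedy selection of $C$ combined with induction on the counterexample via vertex deletion --- applying the theorem to $G-v$ (which has $\chi=k-1$ by criticality and retains $\alpha\le 2$) and lifting the resulting smaller minor back to $G$. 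Resolving this regime, together with verifying the Hall-type condition, is the crux of the proof.
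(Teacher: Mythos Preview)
Your approach is genuinely different from the paper's, and the packing step you flag as ``the crux'' is where it actually fails, not merely where the work lies.

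The arithmetic already does not close. You want $\mu=n-k$ connected right-side branch sets, each containing a full antimatched pair $\{u_i,v_i\}$ so as to guarantee adjacency to every $c\in C$. But $u_iv_i\notin E(G)$ by construction, so no such branch set can have size two; each needs a common $G$-neighbour $w_i$ of $u_i$ and $v_i$ as connector. If the $w_i$ come from $U\setminus C$, you would need $|U\setminus C|=2k-n-\ell\ge \mu=n-k$, i.e.\ $3k\ge 2n+\ell$, which fails already for $n=2k-1$. Worse, every connector drawn from $U\setminus C$ removes one singleton from the right side, so the total drops below $k-\ell$; the bookkeeping forces zero connectors from $U\setminus C$, hence all branch sets of size two, hence disconnected. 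If instead the $w_i$ come from $V(M)$ itself, you are asking for vertex-disjoint induced paths $u_i\,$--$\,w_i\,$--$\,v_i$ in $G$ --- precisely a seagull packing, with prescribed endpoints. Your Hall/SDR heuristic (``a violator forces a triangle in $\bar G$ or an independent triple'') does not bite here: two vertices of $V(M)$ can be $G$-adjacent and still both miss a given $c\in C$, so no independent triple is produced.

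The paper avoids all of this by putting contracted seagulls on the \emph{clique} side rather than using an actual clique $C$ of singletons. It first proves the statement with $\lceil n/2\rceil$ in place of $\chi(G)$: Blasiak's theorem lets one assume $\kappa(G)\ge\lceil n/2\rceil$, a Chudnovsky--Seymour result lets one assume $\omega(G)<(n+3)/4$, and then the Chudnovsky--Seymour seagull-packing theorem supplies $\ell$ disjoint induced $P_3$'s. Contracting each $P_3$ gives a vertex complete to all of $H$ (since the two ends of a $P_3$ dominate $V(G)$ when $\alpha\le 2$), so these $\ell$ vertices form the clique side automatically, and the remaining $n-3\ell\ge\lceil n/2\rceil-\ell$ vertices serve as singleton right-side sets; the boundary case $n=4\ell-1$ needs one extra exchange argument. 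The passage from $\lceil n/2\rceil$ to $\chi(G)$ is then a separate short reduction using Stehl\'ik's theorem on critical graphs with connected complement. Your attempt to bypass the seagull-packing theorem ends up needing an at-least-as-hard constrained version of it.
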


In this paper, we show that Conjecture \ref{conj4} holds  and Conjecture \ref{conj1w} and Conjecture \ref{conj4} are equivalent for graphs with independence number two. 
In fact, we prove a stronger result than  Conjecture \ref{conj4}. To state our result, we need one more definition. Let $K_{m,n}^{m}$ be the graph obtained from a complete bipartite graph $K_{m,n}$ by making every pair of vertices on the side of the bipartition of size $m$ adjacent. In other words, $K_{m,n}^m$ is obtained from the disjoint union of a $K_m$ and an independent set on $n$ vertices by adding all of the possible edges between them.
In this paper, we prove the following result, implying that Conjecture \ref{conj4} holds and Conjecture \ref{conj1w} holds for graphs with independence number two.
\begin{theorem} \label{thm4} Let $G$ be a graph with $\alpha(G)\leq2$. For any positive integer $\ell$ with $2\ell \leq \chi(G)$, we have $G \succeq_{m} K^{\ell}_{\ell,\chi(G)- \ell}$.
\end{theorem}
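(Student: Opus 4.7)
The plan is to proceed by induction on $|V(G)|$, choosing a minimum counterexample with $t:=\chi(G)$. The case $\ell=1$ is immediate, because $\Delta(G)\geq t-1$ supplies a $K_{1,t-1}$ subgraph. If $G$ has a universal vertex $v$, then $\chi(G-v)=t-1$ and induction gives $G-v \succeq_m K^{\ell-1}_{\ell-1,t-\ell}$; attaching $\{v\}$ as an extra singleton on the clique side yields $K^{\ell}_{\ell,t-\ell}$ as required. So we may assume $G$ has no universal vertex, and then $\alpha(G)\leq 2$ gives that for every $v$ the non-neighbourhood $V(G)\setminus N[v]$ is a non-empty clique.

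I then fix an optimal $t$-coloring of $G$ that maximises the number of size-$2$ color classes. Because every color class has size at most $\alpha(G)=2$, this partitions $V(G)$ into a set $S$ (the singletons) and a set $M$ of pairs (each pair being a non-edge of $G$), with $|S|+|M|=t$. The maximality of $|M|$ forces $S$ to be a clique in $G$: two non-adjacent singletons could otherwise be merged into an additional pair. Two structural consequences of $\alpha(G)\leq 2$ will be used throughout: (a) every $s\in S$ is adjacent in $G$ to at least one endpoint of every pair in $M$; and (b) for any two distinct pairs $\{x_i,y_i\},\{x_j,y_j\}\in M$, the four-vertex induced subgraph contains at least two $G$-edges forming a perfect matching between the two pairs.

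Using this structure, I would construct the minor directly. Take $\ell$ vertices of $S$ as singleton clique-side branch sets $B_1,\dots,B_\ell$; use the remaining $|S|-\ell$ singletons together with the $|M|$ pair classes as the $t-\ell$ independent-side branch sets $C_1,\dots,C_{t-\ell}$, each pair $\{x_i,y_i\}$ enlarged by a chosen common neighbour $w_i\in N(x_i)\cap N(y_i)$ so that the branch set $\{x_i,y_i,w_i\}$ is connected. Properties (a) and (b) directly yield all required $B_iB_j$ and $B_iC_j$ adjacencies, while the $B_iB_j$ adjacencies within the clique side come from $S$ being a clique. The technical heart of the argument is to choose the common neighbours $w_i$ simultaneously in a pairwise disjoint way, drawing only from $V(G)=S\cup V(M)$; I would do this via a Hall-type matching between pairs and their candidate common neighbours, using (b) to verify Hall's condition.

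The hardest step is expected to be the case $|S|<\ell$, which forces the clique side to contain vertices from $V(M)$. Since the two vertices of a pair are $G$-non-adjacent, the clique side must be built as an honest clique \emph{minor} of $G[V(M)]$ rather than as a clique subgraph, and this is precisely where the strengthening from $K_{\ell,t-\ell}$ to $K^{\ell}_{\ell,t-\ell}$ is essential: an inner matching/contraction argument inside the triangle-free graph $\bar G[V(M)]$ is used to produce $\ell$ pairwise adjacent connected branch sets while leaving enough vertices to populate the other $t-\ell$ branch sets on the independent side. When neither Hall's condition on the common neighbours nor the inner contraction can be carried out, the rigidity of the failure should force either an independent triple in $G$ (contradicting $\alpha(G)\leq 2$) or a reducible configuration (a small separator, or a vertex whose deletion preserves $\chi$) on which the inductive hypothesis applies.
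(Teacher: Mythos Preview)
Your construction has a vertex-counting problem that Hall's theorem does not fix. In the case $|S|\ge\ell$ you assign all $t$ colour classes to branch sets: the $\ell$ singletons on the clique side, the remaining $|S|-\ell$ singletons and all $|M|$ pairs on the independent side. But the colour classes already partition $V(G)$, so every vertex is committed before you look for the common neighbours $w_i$. A Hall matching yields pairwise distinct $w_i$'s, but each $w_i$ still lies in some other branch set (either a clique-side singleton, an independent-side singleton, or another pair), so the branch sets are not disjoint. Concretely, with $|S|=\ell$ and $|M|=t-\ell$ you would need $|S|+3|M|=n+|M|>n$ vertices. Replacing a pair by a single endpoint does not help either, since property~(a) only guarantees that each clique-side singleton sees \emph{some} endpoint of the pair, not a fixed one. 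The sketch for $|S|<\ell$ (``inner matching/contraction inside $\bar G[V(M)]$'' and ``rigidity of failure forces a reducible configuration'') is not an argument yet.

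The paper avoids this by reversing the roles and by decoupling the minor from the colouring. It first proves the statement with $\chi(G)$ replaced by $\lceil n/2\rceil$: here one packs $\ell$ vertex-disjoint induced $P_3$'s (via the Chudnovsky--Seymour seagull theorem), contracts them to obtain $\ell$ \emph{universal} vertices (this is where $\alpha\le 2$ is used), and lets any $\lceil n/2\rceil-\ell$ of the remaining $n-3\ell$ vertices form the independent side; the tight case $n=4\ell-1$ needs one extra trick (contracting a carefully chosen edge $uv$ with small $V(G)\setminus N[\{u,v\}]$). The passage from $\lceil n/2\rceil$ to $\chi(G)$ is then a separate reduction using Stehl\'ik's theorem on vertex-critical anti-connected graphs: a minimum counterexample is critical, hence either has $|V(G)|\ge 2\chi(G)-1$ (and the $\lceil n/2\rceil$ version applies) or fails to be anti-connected, in which case it splits as a join and induction finishes. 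If you want to salvage your direct approach, the right move is to put the seagulls on the clique side rather than the independent side; but then you must control how many disjoint seagulls exist, which is exactly the Chudnovsky--Seymour input the paper relies on.
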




\section{Proof of Theorem \ref{thm4}} 
To prove Theorem \ref{thm4}, we need to introduce more definitions and results. For a graph $G$, let  $\kappa(G)$ be the connectivity of $G$. For any set $X\subseteq V(G)$, let $G[X]$ denote the subgraph of $G$ induced by $X$. For any set $\mathcal{A}$ of vertex disjoint subgraphs of $G$, let $V(\mathcal{A})$ denote the union of the vertex sets of subgraphs in $\mathcal{A}$.
Given disjoint vertex sets $A$ and $B$, we say that $A$ is \emph{complete} to $B$ if  each vertex in $A$ is adjacent to all vertices in $B$. Analogously, $A$ is \emph{anticomplete} to $B$ if there exists no edge between $A$ and $B$. We say $A$ is \emph{mixed}  on $B$ if $A$ is neither complete nor anticomplete to $B$.
We denote by $N_{G}(A)$ the vertex set in $V(G)-A$ that has adjacent vertices in $A$.
Set $N_{G}[A]:=N_{G}(A)\cup A$.
When there is no danger of confusion, all subscripts will be omitted.
For simplicity, when $A = \{a\}$, the sets $N(\{a\})$ and $N[\{a\}]$ are denoted by $N(a)$  and $N[a]$, respectively.
We say a matching in the complement graph of $G$ is an \emph{anti-matching} in $G$.

For a nonempty clique $C$ in $G$, let $A,B$, and $D$ be the sets of vertices in $V(G)-C$ that are complete to $C$, anticomplete to $C$, and mixed on $C$ respectively. Evidently, $(A,B,C,D)$ is a partition of $V(G)$. We define the \emph{capacity} of $C$, written by $cap(C)$, to be $|D|+\frac{|A\cup B|}{2}$.
A \emph{five-wheel} is a six-vertex graph obtained from a cycle of length five by adding one new vertex adjacent to every vertex of the cycle.

\begin{theorem} \label{thm0}{\rm(\cite{J})} Let $G$ be an $n$-vertex graph with $\alpha(G)\leq2$. If $G$ is not $\lceil\frac{n}{2}\rceil$-connected, then $G\succeq_{m} K_{\lceil\frac{n}{2}\rceil}$.
\end{theorem}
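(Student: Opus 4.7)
The plan is strong induction on $|V(G)|$. Let $G$ be a minimum counterexample, write $t := \chi(G)$, and fix $\ell$ with $2 \le 2\ell \le t$. The case $\ell = 1$ is immediate, since $\Delta(G) \ge t - 1$ gives a $K_{1,t-1} = K^{1}_{1,t-1}$ subgraph, so I may assume $\ell \ge 2$. A routine application of the inductive hypothesis to $G - v$ reduces to the case where $G$ is $\chi$-critical: if $\chi(G - v) = t$ for some $v$, then $G - v$ satisfies the hypotheses on fewer vertices and already contains the desired minor. Hence $\delta(G) \ge t - 1$. Analogous contraction/identification moves further rigidify $G$.

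I would then choose a clique $C \subseteq V(G)$ of maximum capacity and consider the partition $V(G)\setminus C = A \cup B \cup D$ into vertices complete to, anticomplete to, and mixed on $C$. Two structural consequences of $\alpha(G) \le 2$ are crucial. First, $B$ is itself a clique, because two non-adjacent vertices of $B$ together with any $c \in C$ would form an independent triple. Second, for each $d \in D$, the non-neighbours of $d$ inside $C$ are controlled by the triangle-freeness of $\overline{G}$ and can be absorbed by a suitable partner of $d$ in $D \cup B$. The capacity $cap(C) = |D| + |A\cup B|/2$ is tailored so that $|C| + cap(C)$ gives a natural upper-bound expression for $t$, with non-adjacent pairs in $A \cup B$ sharing a colour and vertices of $D$ each using their own; maximality of $cap(C)$ then blocks the obvious local modifications and provides the main arithmetic tool.

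With this data, I would construct the required minor directly. Pick $\ell$ vertices of $C$ as the $K_\ell$-side of the target graph; they are pairwise adjacent. For the independent side of $t - \ell$ branch sets, use the remaining $|C| - \ell$ vertices of $C$ and the vertices of $A$ as singleton branch sets (each is automatically complete to the chosen $\ell$), and pair up vertices of $D$ (and, when needed, of $B$) into connected size-two branch sets, each of which sees every chosen $c_i \in C$. The clique structure of $B$, together with the partner-in-$D$ structure coming from $\alpha(G) \le 2$, guarantees that such pairings exist and are vertex-disjoint, and the capacity count yields exactly $t - \ell$ such branch sets.

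The main obstacle, and where I expect the bulk of the work, is handling the cases where this pairing construction is obstructed. The prototypical obstruction is a five-wheel centred at a vertex of $C$, which blocks two candidate vertices of $D$ from sharing a branch set while both remaining complete to the chosen $c_i$. In such cases I expect either to locally modify $C$, using maximality of $cap(C)$ to derive a contradiction, or to expose a vertex cut of size strictly less than $\lceil |V(G)|/2 \rceil$; in the latter scenario, invoking Theorem \ref{thm0} yields a $K_{\lceil n/2\rceil}$-minor, to which I can attach singleton branch sets drawn from the critical-graph tail of $G$ to reach the full $t$ branch sets needed for $K^{\ell}_{\ell, t-\ell}$. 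Balancing the choice of $C$, the pairing construction, and the five-wheel analysis so that all $t - \ell$ branch sets are simultaneously complete to all $\ell$ chosen clique vertices is the delicate step, and is where I expect the paper to spend its technical effort.
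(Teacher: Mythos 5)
Your proposal does not address the statement at hand. Theorem~\ref{thm0} asserts that an $n$-vertex graph $G$ with $\alpha(G)\leq 2$ that is \emph{not} $\lceil\frac{n}{2}\rceil$-connected already contains a $K_{\lceil\frac{n}{2}\rceil}$-minor; it says nothing about $\chi(G)$, about a parameter $\ell$, or about the graphs $K^{\ell}_{\ell,\chi(G)-\ell}$. What you have written is instead a sketch of an attack on the paper's main result (Theorem~\ref{thm4} via Theorem~\ref{thm3.5}): you set $t:=\chi(G)$, reduce to $\chi$-critical graphs, introduce the capacity of a clique, discuss five-wheels, and aim at a $K^{\ell}_{\ell,t-\ell}$-minor. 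None of that machinery is needed for, or even relevant to, Theorem~\ref{thm0}. Worse, your final paragraph explicitly says ``invoking Theorem~\ref{thm0} yields a $K_{\lceil n/2\rceil}$-minor'' --- so even on its own terms the argument assumes the very statement it is supposed to establish, which is circular.

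For the record, Theorem~\ref{thm0} is not proved in this paper at all: it is quoted from Blasiak~\cite{J}, and a proof must start from the hypothesis $\kappa(G)<\lceil\frac{n}{2}\rceil$. The natural first step is to take a minimum vertex cut $S$; since $\alpha(G)\leq 2$, the graph $G-S$ splits into two anticomplete parts $A$ and $B$, each of which is a clique, and the work consists in growing the larger of these cliques into a $K_{\lceil\frac{n}{2}\rceil}$-minor by contracting suitable edges between the smaller part and $S$ (a matching-type argument exploiting that any two vertices of $S$ with a common non-neighbour in $A$ must be adjacent). Your proposal contains no trace of this cutset analysis, so as a proof of the stated theorem it has a fatal gap: it proves a different statement by a method that presupposes the one in question.
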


\begin{theorem} \label{thm2}{\rm(\cite{MP})} Let $G$ be an $n$-vertex graph with $\alpha(G)\leq2$. If $n$ is odd and $\omega(G)\geq \frac{n+3}{4}$, then $G\succeq_{m} K_{\lceil\frac{n}{2}\rceil}$.
\end{theorem}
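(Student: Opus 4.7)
My plan is to reduce to the highly-connected case via Theorem \ref{thm0}, and then exhibit the desired $K_{\lceil\frac{n}{2}\rceil}$-minor by combining a maximum clique (kept as singleton branch sets) with a matching-plus-absorption structure on the remaining vertices. By Theorem \ref{thm0}, I may assume $G$ is $\lceil\frac{n}{2}\rceil$-connected, so $\delta(G) \geq \frac{n+1}{2}$; combined with $\alpha(G) \leq 2$, this yields $V(G) \setminus \{u, v\} = N(u) \cup N(v)$ for every non-adjacent pair $u, v$, and hence $|N(u) \cap N(v)| = \deg(u) + \deg(v) - (n-2) \geq 3$.

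Next I fix a maximum clique $K$ with $|K| = w = \omega(G) \geq \frac{n+3}{4}$ and set $R := V(G) \setminus K$, so $|R| = n - w \leq \frac{3n-3}{4}$. For each $v \in R$, the defect $M_v := K \setminus N(v)$ is nonempty by maximality of $K$. Two structural facts will drive the construction: (i) if $u, v \in R$ are non-adjacent in $G$, then $M_u \cap M_v = \emptyset$, for otherwise a common element together with $u$ and $v$ would form an independent triple; and (ii) for every $k \in K$, the set $R_k := \{v \in R : kv \notin E(G)\}$ is a clique of $G[R]$, by the same argument applied to two non-adjacent vertices in $R_k$ together with $k$. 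I then take each vertex of $K$ as a singleton branch set and pair up the vertices of $R$ using a matching of $G[R]$ of size $\lfloor |R|/2\rfloor$, which exists by a standard augmenting-path argument: the unmatched vertices of any maximum matching form an independent set, which has at most two elements under $\alpha(G[R]) \leq 2$. This yields $w + \lfloor(n - w)/2\rfloor \geq \lceil\frac{n}{2}\rceil$ connected candidate branch sets.

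The only obstruction to pairwise adjacency is a matched pair $\{u, v\} \subseteq R$ (with $uv \in E(G)$) sharing a common non-neighbor $k \in M_u \cap M_v$ inside $K$, in which case the pair is not adjacent to the singleton $\{k\}$. I would resolve such conflicts either by rerouting the matching in $R$ to break $M_u \cap M_v$, using (ii) and the triangle-freeness of $\bar{G}[R]$ to find a conflict-free swap along an alternating path, or by absorbing $k$ into the pair to form the connected triple $\{u, v, k\}$ at the cost of one singleton from $K$. The main obstacle I anticipate is controlling the cascade of absorptions: formalizing this via a Hall-type condition on the auxiliary bipartite graph whose parts are matched pairs of $R$ and their defect vertices in $K$, I would need to verify that the hypothesis $\omega(G) \geq \frac{n+3}{4}$ provides exactly the budget ensuring that, after all absorptions, the count $w + \lfloor |R|/2\rfloor - \#\{\text{absorbed singletons}\}$ remains at least $\lceil\frac{n}{2}\rceil$, with the extremal case (where the inequality is tight) likely requiring a dedicated analysis of how the defects $M_v$ can simultaneously concentrate on a small subset of $K$.
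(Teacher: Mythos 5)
This statement is quoted from Chudnovsky and Seymour's \emph{Packing seagulls} paper \cite{MP}; the present paper gives no proof of it, so there is nothing internal to compare against. Judged on its own terms, your proposal has genuine gaps. The preliminary observations are correct (the reduction via Theorem \ref{thm0}, the facts that $M_u\cap M_v=\emptyset$ for non-adjacent $u,v$ and that each $R_k$ is a clique), but the construction is incomplete at exactly the point where the theorem is hard. First, you misidentify the obstructions: you claim the only failure of pairwise adjacency is between a matched pair and a singleton of $K$, but two matched pairs $\{u,v\}$ and $\{u',v'\}$ in $R$ can be anticomplete to each other. A graph with $\alpha\leq 2$ may contain an induced $2K_2$ (its complement need only be triangle-free, and $C_4$ is triangle-free), and neither the minimum-degree bound $\delta(G)\geq\frac{n+1}{2}$ nor membership in $R$ rules this out. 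So pair--pair conflicts must also be handled, and your rerouting/absorption scheme is not set up to see them.

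Second, even for the pair--singleton conflicts you do consider, the resolution is only announced, not executed. The budget is tight: with $n=4m+1$ and $\omega(G)=m+1$ extremal, you have roughly $\frac{3m}{2}$ pairs, every one of which may be conflicted, but only about $\frac{m}{2}$ absorptions are affordable before the count drops below $\lceil\frac{n}{2}\rceil$. Hence almost all conflicts must be removed by rerouting the matching, and the claim that a ``conflict-free swap along an alternating path'' always exists is precisely the content of the theorem --- it is not implied by fact (ii) or by triangle-freeness of $\overline{G}[R]$, and no Hall-type condition is actually verified. (A smaller slip: a maximum matching of $G[R]$ is only guaranteed to have size $\lfloor\frac{|R|}{2}\rfloor-1$ when $|R|$ is even, since the two exposed vertices need only be non-adjacent; this further erodes the budget.) As written, the proposal is a plausible opening move followed by an acknowledged placeholder where the real argument should be; the actual proof in \cite{MP} requires substantially more work than this sketch suggests.
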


\begin{theorem} \label{thm1}{\rm(\cite{MP})} Let $G$ be a graph with $\alpha(G)\leq2$, and let $\ell\geq0$ be an integer, such that if $\ell=2$ then $G$ is not a five-wheel. Then $G$ has $\ell$ pairwise disjoint induced subgraphs, each isomorphic to a $3$-vertex-path $P_3$ if and only if

{\rm(1)} $|V(G)|\geq 3\ell$,

{\rm(2)} $G$ is $\ell$-connected,

{\rm(3)} every clique of $G$ has capacity at least $\ell$, and

{\rm(4)} $G$ admits an anti-matching of cardinality $\ell$.

\end{theorem}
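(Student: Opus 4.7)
My plan is to prove the biconditional in two parts. The reverse implication (that a $P_3$-packing of size $\ell$ forces (1)--(4)) is a direct verification; the forward implication is the substantive one and I would attack it by induction on $\ell$.

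For the reverse direction, suppose $G$ has $\ell$ pairwise vertex-disjoint induced $P_3$'s. Condition (1) is immediate from counting vertices. Condition (4) follows because the two endpoints of each induced $P_3$ are nonadjacent, yielding $\ell$ disjoint complement-edges, i.e., an anti-matching. For (3), fix a clique $C$ with associated partition $(A,B,C,D)$ of $V(G)$; I would run a short case analysis on how many vertices of each $P_3$ lie in $C$ and where the rest sit among $A$, $B$, $D$, and show that each $P_3$ contributes at least $1$ to $|D|+\tfrac{1}{2}|A\cup B|$, so summing over the $\ell$ disjoint $P_3$'s gives $cap(C)\ge \ell$. For (2), suppose $G$ has a separator $S$ with $|S|<\ell$. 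Because $\alpha(G)\le 2$, any nonadjacent pair inside one component of $G-S$, combined with any vertex of another component, forms an independent triple, forcing every component of $G-S$ to be a clique. Since the endpoints of a $P_3$ are nonadjacent, no $P_3$ lies entirely in one clique-component; a short case analysis then shows each $P_3$ must use at least one vertex of $S$, contradicting $|S|<\ell$.

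For the forward direction, I induct on $\ell$. The base cases $\ell\le 1$ follow from the fact that a connected graph on at least three vertices that is not a clique admits an induced $P_3$. For the inductive step, the goal is to select a single induced $P_3$, call it $P$, so that $G':=G-V(P)$ still satisfies conditions (1)--(4) with parameter $\ell-1$ (and is not a five-wheel when $\ell-1=2$). Condition (1) transfers automatically, and (4) can be arranged by choosing the two endpoints of $P$ to form one edge of the guaranteed anti-matching of size $\ell$, so only that one anti-matching edge is lost. The delicate part is preserving $(\ell-1)$-connectivity and the capacity lower bound for every clique of $G'$ after removing three vertices.

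The main obstacle I anticipate is selecting $P$ so that deletion decreases connectivity by at most one and decreases every clique's capacity by at most one. For connectivity, I would examine a minimum vertex cut $S$ of $G$ and, using the clique-component structure from the necessity argument, place the middle vertex of $P$ in $S$ with its endpoints in different clique-components of $G-S$. For the capacity bound, attention is focused on the cliques $C$ with $cap(C)$ close to $\ell$; for each such tight clique one verifies that a suitable $P_3$ can be chosen whose contribution to $cap(C)$ is exactly $1$. When several tightness conditions (minimum cut, tight clique, tight anti-matching) collide, a case analysis is needed to balance them, and I expect this is where the five-wheel must be singled out: there the graph is small enough that every induced $P_3$ destroys a condition for $\ell-1=1$, and the induction must therefore treat the boundary case $|V(G)|=3\ell$ separately, where structural rigidity forces only a short list of possible graphs to consider.
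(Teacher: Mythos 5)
This statement is quoted from Chudnovsky and Seymour's \emph{Packing seagulls} paper (reference \cite{MP}) and is not proved in the present paper, so there is no in-paper proof to compare against; I can only assess your proposal on its own terms. Your necessity direction is essentially correct and complete as a sketch: (1) and (4) are immediate, the case analysis for (3) works (note that if two vertices of an induced $P_3$ lie in $C$ they must be the two vertices of an edge, and the third vertex is then forced into $D$, so every induced $P_3$ really does contribute at least $1$ to $|D|+\tfrac12|A\cup B|$), and for (2) the observation that $G-S$ splits into exactly two clique components, so every induced $P_3$ must meet $S$, is the right argument.

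The sufficiency direction, however, contains a genuine gap: the entire content of the theorem is concentrated in the claim that one can always choose an induced $P_3$ whose deletion decreases the connectivity by at most one, decreases the capacity of \emph{every} clique by at most one, destroys at most one anti-matching edge, and does not create a five-wheel at the level $\ell-1=2$ --- and you do not establish this claim, you only announce that ``a case analysis is needed.'' Deleting three vertices can drop the connectivity by three, can drop $cap(C)$ by three for a clique $C$ all of whose three deleted neighbours lie in the mixed set $D$, and can kill three anti-matching edges at once; these failures can occur for different cliques and different cutsets simultaneously, so there is no a priori reason a single $P_3$ avoiding all of them exists. Indeed, the Chudnovsky--Seymour proof of this result is a long and delicate argument (it occupies most of a thirty-page Combinatorica paper) and does not proceed by the one-step greedy induction you describe; the fact that the five-wheel is a genuine exception already shows that ``remove one good $P_3$ and recurse'' cannot be made to work without substantial additional structural analysis. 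As written, your proposal proves the easy direction and restates the hard direction as a plan.
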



\begin{lemma} \label{lem1}
Let $G$ be an $n$-vertex  graph with $\alpha(G)\leq2$, $\omega(G)<\lceil\frac{n}{2}\rceil$, and with $\kappa(G)\geq \lceil\frac{n-1}{4}\rceil$. For any integer $\ell$ with $\ell \leq \lceil\frac{n}{4}\rceil$, if $n\geq 4\ell+1$ is odd, then $G$ has $\ell$ pairwise disjoint induced $3$-vertex-paths $P_3$.
\end{lemma}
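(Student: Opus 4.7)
The plan is to reduce the lemma to Theorem~\ref{thm1}, whose conclusion is exactly what we want, by verifying its four structural hypotheses for the given $\ell$. The exceptional case $\ell = 2$ can be dismissed at once: the five-wheel has six vertices, while $n$ is odd by assumption, so $G$ is not a five-wheel.

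Conditions (1) and (2) of Theorem~\ref{thm1} are immediate: $|V(G)| = n \ge 4\ell+1 \ge 3\ell$, and the hypothesis $\kappa(G) \ge \lceil (n-1)/4 \rceil$ combined with $n \ge 4\ell+1$ gives $\kappa(G) \ge \lceil 4\ell/4 \rceil = \ell$. For the capacity condition (3), I will fix an arbitrary nonempty clique $C$ with associated partition $(A, B, C, D)$ of $V(G)$ and use $n$ odd together with $\omega(G) < \lceil n/2 \rceil = (n+1)/2$. Since
\[
|A|+|B|+|D| = n - |C| \ge n - \omega(G) > \tfrac{n-1}{2}
\]
and the left-hand side is an integer, we get $|A|+|B|+|D| \ge (n+1)/2$, and therefore
\[
cap(C) = |D| + \tfrac{|A|+|B|}{2} \ge \tfrac{|A|+|B|+|D|}{2} \ge \tfrac{n+1}{4} \ge \tfrac{4\ell+2}{4} > \ell,
\]
as required.

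For the anti-matching condition (4), I will argue by contradiction. If no anti-matching of size $\ell$ exists, then a maximum matching $M$ of $\overline{G}$ satisfies $|M| \le \ell - 1$. Any two vertices uncovered by $M$ must be adjacent in $G$, since otherwise the edge between them in $\overline{G}$ would enlarge $M$. Hence the $n - 2|M| \ge n - 2\ell + 2$ uncovered vertices form a clique of $G$, giving $\omega(G) \ge n - 2\ell + 2$. Combined with $\omega(G) < (n+1)/2$ this forces $n < 4\ell - 3$, contradicting $n \ge 4\ell + 1$. With all four hypotheses of Theorem~\ref{thm1} verified, the $\ell$ pairwise disjoint induced $P_3$'s are guaranteed. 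I do not anticipate a real obstacle here: the proof is essentially a bookkeeping exercise on top of Theorem~\ref{thm1}, and the only mildly subtle point is the capacity estimate, where the parity of $n$ is used to convert the strict inequality $n - \omega(G) > (n-1)/2$ into the integer lower bound $(n+1)/2$ that drives the estimate.
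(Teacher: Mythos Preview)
Your proof is correct and follows essentially the same approach as the paper: verify the four hypotheses of Theorem~\ref{thm1} and dismiss the five-wheel exception via the parity of $n$. The only cosmetic difference is in the capacity estimate, where the paper bounds $|V(G)\setminus C|\ge \lceil n/2\rceil \ge 2\ell+1$ directly before dividing, while you go through $(n+1)/4$; the computations are equivalent.
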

\begin{proof}
Since $n$ is odd, $G$ is not a five-wheel. Hence, to prove the lemma, by Theorem \ref{thm1}, it suffices to prove that $G$ satisfies Theorem \ref{thm1} (1)-(4). Theorem \ref{thm1} (1) and (2) are obviously true for $G$ as $\ell \leq \frac{n-1}{4}$.
Let $C$ be a clique of $G$. Since $\omega(G)<\lceil\frac{n}{2}\rceil$, we have $|C|< \lceil\frac{n}{2}\rceil$, so
$|V(G\backslash C)|\geq \lceil\frac{n}{2}\rceil \geq 2\ell+1$. Thus $cap(C)\geq \lceil\frac{2\ell+1}{2}\rceil >\ell$, implying that Theorem \ref{thm1} (3) holds.

We claim that Theorem \ref{thm1} (4) holds for $G$. Suppose not. Let $A$ be a maximal anti-matching of $G$ with $|V(A)|$ as large as possible. Then $G\backslash V(A)$ is a clique and $|V(A)|\leq2(\ell-1)$. Hence,
$$\omega(G)\geq|V(G)-V(A)|\geq n-2(\ell-1)\geq \lceil\frac{n}{2}\rceil$$ as $2\ell\leq \lceil\frac{n}{2}\rceil$, which is a contradiction. So Theorem \ref{thm1} (4) holds for $G$.
\end{proof}


\begin{theorem} \label{thm3.5}
Let $G$ be an $n$-vertex graph with $\alpha(G)=2$. For any positive integer $\ell$ with $2\ell \leq \lceil\frac{n}{2}\rceil$, we have $G \succeq_{m} K^{\ell}_{\ell,\lceil\frac{n}{2}\rceil- \ell}$.
\end{theorem}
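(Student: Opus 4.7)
The plan is to dispose of two easy cases and then run a $P_3$-contraction argument for the rest; the tight parity case $n=4\ell-1$ will require extra work.

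First, if $\omega(G)\ge\lceil n/2\rceil$ then $K^{\ell}_{\ell,\lceil n/2\rceil-\ell}$ already sits inside $G$ as a subgraph, and if $G$ is not $\lceil n/2\rceil$-connected then Theorem~\ref{thm0} yields $G\succeq_m K_{\lceil n/2\rceil}$, which still contains the target. So I may assume $G$ is $\lceil n/2\rceil$-connected with $\omega(G)<\lceil n/2\rceil$.

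Under these hypotheses I apply Theorem~\ref{thm1} with parameter $\ell$ to produce $\ell$ pairwise disjoint induced copies of $P_3$, checking its four conditions essentially as in the proof of Lemma~\ref{lem1}: $n\ge 3\ell$ follows from $2\ell\le\lceil n/2\rceil$; $G$ is $\ell$-connected since $\kappa(G)\ge 2\ell$; any clique $C$ satisfies $|V(G)\setminus C|\ge\lfloor n/2\rfloor+1$, hence $cap(C)\ge\ell$; and a maximal anti-matching covering at most $2\ell-2$ vertices would force a clique of size $\ge\lceil n/2\rceil$, a contradiction. The five-wheel exception never arises, because $\ell\ge 2$ together with $2\ell\le\lceil n/2\rceil$ forces $n\ge 7$. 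Writing the $P_3$'s as $P^{(i)}=x_iy_iz_i$ with $x_iz_i\notin E(G)$, I contract each $P^{(i)}$ to a single vertex $p_i$. Since $\alpha(G)\le 2$, every vertex outside $V(P^{(i)})$ is adjacent to one of $x_i,z_i$, so $p_1,\dots,p_\ell$ form a clique $K_\ell$ in the contracted graph and each of the $n-3\ell$ vertices of $V(G)\setminus\bigcup_iV(P^{(i)})$ is adjacent to every $p_i$. Hence $G\succeq_m K^{\ell}_{\ell,n-3\ell}$, which contains $K^{\ell}_{\ell,\lceil n/2\rceil-\ell}$ whenever $\lfloor n/2\rfloor\ge 2\ell$, covering the case $n$ even and the case $n$ odd with $n\ge 4\ell+1$.

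The only case left is $n=4\ell-1$ odd, where $\lfloor n/2\rfloor=2\ell-1$ and the naive contraction yields only a $K^{\ell}_{\ell,\ell-1}$-minor, one independent-side branch set short. Here I first invoke Theorem~\ref{thm2}: if $\omega(G)\ge\ell+1\ge(n+3)/4$ then already $G\succeq_m K_{\lceil n/2\rceil}$ and we are done, so I assume $\omega(G)\le\ell$. I then ``split'' one of the $P^{(i)}$'s, using the contracted edge $\{x_i,y_i\}$ as a clique-side branch set and the singleton $\{z_i\}$ as an additional independent-side branch set. Together with the other $\ell-1$ contracted $P^{(j)}$'s and the $\ell-1$ free vertices as independent-side singletons, this produces the required $\ell$ clique-side and $\ell$ independent-side branch sets, and all adjacencies demanded by $K^{\ell}_{\ell,\ell}$ follow from $\alpha(G)\le 2$ except that every free vertex must have a neighbor in $\{x_i,y_i\}$. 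It remains to pick $i$ and (possibly) swap the labels $x_i\leftrightarrow z_i$ so that this last condition holds.

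The main obstacle is exactly this last point. The ``bad'' free vertices for $P^{(i)}$---those adjacent to exactly one endpoint of $P^{(i)}$ but not to its middle vertex $y_i$---all lie in the clique of non-neighbors of $y_i$, so the bound $\omega(G)\le\ell$ combined with the non-adjacency patterns enforced by $\alpha(G)\le 2$ should constrain them rigidly across the different $P^{(i)}$'s; the task is to exploit these rigid constraints to exhibit a workable choice of $i$ and labeling. Executing this exchange/counting argument cleanly is where the real work lies, while the rest of the proof is a routine application of Theorems~\ref{thm0},~\ref{thm1}, and~\ref{thm2}.
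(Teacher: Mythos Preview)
Your reduction to the boundary case $n=4\ell-1$ with $\omega(G)\le\ell$ is correct and matches the paper's: a minimal counterexample has $n$ odd, the case $n\ge 4\ell+1$ is dispatched exactly by the $P_3$-contraction you describe, and Theorems~\ref{thm0} and~\ref{thm2} supply the connectivity and clique bounds.

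Where you diverge is the hard case $n=4\ell-1$, and there the proposal is genuinely incomplete. You pack $\ell$ disjoint $P_3$'s and then hope to pick an index $i$ and an endpoint so that the $\ell-1$ free vertices all see the retained edge; you rightly flag this as the real work but do not do it. With the packing \emph{fixed}, merely choosing $i$ and a swap gives very little leverage: for each free vertex $f$ and each $i$, at most one of the two splits can fail (since $f$ must see $x_i$ or $z_i$), but nothing you have written rules out every $i$ having both a ``left-bad'' and a ``right-bad'' free vertex, and simple pigeonhole on $2\ell$ edge-choices versus $\ell-1$ free vertices does not close once $\ell\ge 3$.

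The paper's manoeuvre is different and is exactly what gives the exchange argument traction. Rather than carving the special edge out of a $P_3$, it \emph{first} produces an edge $uv$ with $|M_{uv}|\le\ell-1$ (a two-line argument: pick non-adjacent $x,y$ and any $c\in N(x)\cap N(y)$; then $M_{cx}\subseteq (V(G)\setminus N[x])\setminus\{y\}$, a clique minus a point, hence of size at most $\ell-1$). It then deletes $u,v$, packs only $\ell-1$ $P_3$'s in $G\setminus\{u,v\}$ via Lemma~\ref{lem1}, and among all such packings chooses one maximizing $|N_G[\{u,v\}]\setminus V(\mathcal P)|$. If some leftover vertex $b\notin N_G[\{u,v\}]$ survived, the bound $|M_{uv}|\le\ell-1=|\mathcal P|$ forces some $P\in\mathcal P$ to lie entirely in $N_G[\{u,v\}]$; since $b$ sees an endpoint of $P$, one re-routes $P$ through $b$, contradicting maximality. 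The two ingredients you are missing are (i) choosing the distinguished edge \emph{before} building the packing, with a priori control on $|M_{uv}|$, and (ii) optimizing over packings rather than over a single index in a fixed packing.
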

\begin{proof}
Assume that Theorem \ref{thm3.5} is not true. Let $G$ be a counterexample to Theorem \ref{thm3.5} with $|V(G)|$ as small as possible. 


\begin{claim}\label{claim0}
$n\geq4\ell-1$ is odd.
\end{claim}

\begin{proof}
Assume that $n$ is even. Since Theorem \ref{thm3.5} holds for $G \backslash v$ for any $v\in V(G)$, Theorem \ref{thm3.5} holds for $G$ as $\lceil\frac{n}{2}\rceil=\lceil\frac{n-1}{2}\rceil$, a contradiction. Therefore, $n$ is odd. Since $\ell \leq \lceil\frac{n}{2}\rceil- \ell=\frac{n+1}{2}-\ell$, we have $n\geq4\ell-1$.
\end{proof}


Since $G$ is a counterexample to Theorem \ref{thm3.5}, we have $\kappa(G)\geq \lceil\frac{n}{2}\rceil$ by Theorem \ref{thm0}. Moreover, since $n$ is odd by Claim \ref{claim0}, we have $\omega(G)< \frac{n+3}{4}$ by Theorem \ref{thm2}.

\begin{claim}\label{claim1}
$n=4\ell-1$.
\end{claim}
\begin{proof}
Suppose not. Since $n$ is odd by Claim \ref{claim0}, we have $n\geq4\ell+1$. Since $\kappa(G)\geq \lceil\frac{n}{2}\rceil \geq \lceil\frac{n-1}{4}\rceil $
and $\omega(G)< \frac{n+3}{4} \leq \lceil\frac{n}{2}\rceil$,
it follows from Lemma \ref{lem1} that 
$G$ contains $\ell$ pairwise disjoint induced subgraphs, each isomorphic to $P_3$, denoted the collection of such subgraphs  by $\mathcal{P}$.
Let $H$ be the graph obtained from $G$ by contracting each induced path of $\mathcal{P}$ to a vertex. Since $\alpha(G)=2$, each vertex in $H$ obtained by contracting each member of $\mathcal{P}$ is complete to all other vertices in $H$. Moreover, since $n\geq 4\ell+1$, we have $|V(G)-V(\mathcal{P})|=n-3\ell\geq \lceil\frac{n}{2}\rceil -\ell$, so $H$ contains a subgraph isomorphic to $K^{\ell}_{\ell,\lceil\frac{n}{2}\rceil- \ell}$, which is a contradiction.
\end{proof}


Since $\omega(G)< \frac{n+3}{4}$ and $n=4\ell-1$ by Claim \ref{claim1}, we have $\omega(G)\leq \ell$.

For any edge $uv\in E(G)$, set $M_{uv}:=V(G)-N[\{u,v\}]$.
Since $\alpha(G)=2$, the set $M_{uv}$ induces a clique in $G$. Hence, $|M_{uv}|\leq \ell$ as $\omega(G)\leq \ell$.

\begin{claim}\label{claim3}
There exists an edge $uv$ such that $|M_{uv}|\leq \ell-1$.
\end{claim}
\begin{proof}
Let $x,y\in V(G)$ with $xy\notin E(G)$. Since $\alpha(G)=2$, such $x,y$ exist.
Set \[C := N(x)\cap N(y),\ X:=V(G)-N[y],\ Y:= V(G)-N[x].\] Evidently,  $x\in X$ and $y\in Y$. Since $\alpha(G)=2$, we have that $(C,X,Y)$ is a partition of $V(G)$, and $G[X]$ and $G[Y]$ are cliques.
Moreover, since $\omega(G)\leq \ell$, we have $|Y|\leq \ell$. For any vertex $c \in C$, since $C\cup X\cup\{y\}\subseteq N[\{c,x\}]$, we have $M_{cx}\subseteq Y-\{y\}$, so $|M_{cx}|\leq \ell-1$.
\end{proof}

Choose an edge $uv\in E(G)$ such that $|M_{uv}|\leq \ell-1$. By Claim \ref{claim3}, such $uv$ exists.
Set $G':=G\backslash \{u,v\}$ and $n':=|G'|$. Then $n'=n-2=4\ell -3$ is odd by Claim \ref{claim1}.

\begin{claim}\label{claim4}
The graph $G'$ contains $\ell-1$ pairwise disjoint induced subgraphs, each isomorphic to $P_3$.
\end{claim}
\begin{proof}
Set $\ell':=\ell-1$. Then $n'=4\ell' +1$ is odd, so $\ell'< \lceil\frac{n'}{4}\rceil$. 
Since $\kappa(G)\geq \lceil\frac{n}{2}\rceil$, we have $\kappa(G')\geq \lceil\frac{n}{2}\rceil-2= \lceil\frac{n'}{2}\rceil-1 \geq \lceil\frac{n'-1}{4}\rceil $.
Moreover, since $\omega(G)< \frac{n+3}{4}$, we have $\omega(G')< \frac{n+3}{4} \leq \lceil\frac{n'}{2}\rceil$.
Hence, by Lemma \ref{lem1}, $G'$ has $\ell-1$ pairwise disjoint induced subgraphs, each isomorphic to $P_3$.
\end{proof}

By Claim \ref{claim4}, there are 
$\ell-1$ pairwise disjoint induced subgraphs in $G'$, each isomorphic to $P_3$, denote the collection of such subgraphs  by $\mathcal{P}$.
Set $B:=V(G)-(V(\mathcal{P})\cup \{u,v\})$.
Then $|B|=\ell$ by Claim \ref{claim1}. Let $H$ be the graph obtained from $G$ by contracting the edge $uv$ and each induced path of $\mathcal{P}$ to a vertex. Since $\alpha(G)=2$, each vertex in $H$ obtained by contracting each member of $\mathcal{P}$ is complete to all other vertices in $H$. Hence, to prove $H$ contains a subgraph isomorphic to  $K^{\ell}_{\ell,\ell}=K^{\ell}_{\ell,\lceil\frac{n}{2}\rceil-\ell}$, it suffices to show that  $B\subseteq N_G[\{u,v\}]$ as $|B|=\ell$.
Without loss of generality we may assume that $\mathcal{P}$ is chosen with $|N_G[\{u,v\}]-V(\mathcal{P})|$ as large as possible.


\begin{claim}\label{claim5}
$B\subseteq N_G[\{u,v\}]$. 
\end{claim}
\begin{proof}
Suppose not. Let $b\in B-N_G[\{u,v\}]$. Since $|M_{uv}|\leq \ell-1=|\mathcal{P}|$ by the choice of $uv$, there must exist an induced 3-vertex-path $P \in \mathcal{P}$ such that $V(P) \subseteq N_G[\{u,v\}]$. Set $P:=a_1$-$a_2$-$a_3$. 
Since $\alpha(G)=2$, by symmetry we may assume that $a_1b\in E(G)$. Assume that $a_3b\in E(G)$. Replacing $P$ with $a_3$-$b$-$a_1$ get $\ell-1$ pairwise disjoint induced 3-vertex-paths $\mathcal{P}'$ in $G'$. Since $|N_G[\{u,v\}]-V(\mathcal{P}')|>|N_G[\{u,v\}]-V(\mathcal{P})|$, we get a contradiction to the choice of $\mathcal{P}$. Hence, to prove the claim, it suffices to show that $a_3b\in E(G)$.


Assume not. When $a_2b\notin E(G)$,  we can replace $P$ with $a_2$-$a_1$-$b$ to get a contradiction to the choice of $\mathcal{P}$. So $a_2b\in E(G)$. Under this case, we still can replace the $P$ with $a_3$-$a_2$-$b$ to get a contradiction to the choice of $\mathcal{P}$. Hence, $b$ is complete to $\{a_1,a_3\}$.
\end{proof}
\end{proof}




A graph $G$ is \emph{$k$-vertex-critical} if $\chi(G) = k$
and $\chi(G-v) < k$ for each $v\in V(G)$. If the complement of $G$ is connected, we say that $G$ is {\em anti-connected}.

\begin{theorem}{\rm(\cite{MS})}\label{thmMS}
Let $G$ be a $k$-vertex-critical graph such that $G$ is anti-connected. Then for any $x\in V(G)$, the graph $G-x$ has a $(k-1)$-coloring in which every color class contains at least {\rm2} vertices.
\end{theorem}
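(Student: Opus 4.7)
The plan is to prove the statement by an extremal argument over the set of $(k-1)$-colorings of $G-x$, which is nonempty because $G$ is $k$-vertex-critical. Among all such colorings I would choose one $f$, with color classes $V_1,\dots,V_{k-1}$, that minimizes the number of singleton color classes, breaking ties (say) by minimizing the number of classes of size exactly $2$. The goal is then to show that the minimum number of singletons is zero.

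Suppose toward contradiction that some class $V_1=\{v\}$ is a singleton. The first easy observation is that $vx\in E(G)$: otherwise the partition $V_1\cup\{x\},V_2,\dots,V_{k-1}$ would be a proper $(k-1)$-coloring of $G$, contradicting $\chi(G)=k$. The anti-connectedness of $G$ is used in the second observation: since $\overline{G}$ is connected, $v$ has at least one non-neighbor in $G$, and because $vx\in E(G)$ this non-neighbor $u$ must lie in $V(G-x)\setminus\{v\}$. Let $V_j$ denote its color class (so $j\neq 1$).

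The attempted improvement is to recolor $u$ with color $1$, producing a proper $(k-1)$-coloring of $G-x$ since $uv\notin E(G)$. If $|V_j|\geq 3$ the number of singletons strictly decreases, contradicting the extremal choice of $f$; and $|V_j|=1$ is impossible since it would yield a $(k-2)$-coloring of $G-x$, contradicting $\chi(G-x)=k-1$. The substantive obstacle is therefore the case $|V_j|=2$: writing $V_j=\{u,w\}$, the singleton $\{v\}$ is absorbed but $\{w\}$ becomes a new singleton, so the primary measure is unchanged.

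To overcome this obstacle I would exploit the secondary measure together with the anti-connectedness of $G$. The extremal choice of $f$ first forces every non-neighbor of $v$ in $G-x$ to lie in a class of size exactly $2$. One then follows a path of non-edges of $G$ (which exists since $\overline{G}$ is connected) starting at $v$, and propagates the analysis to each successive singleton produced by a swap. The bookkeeping should show that either at some step a non-neighbor of the current singleton is discovered in a class of size at least $3$ (yielding a strict improvement and a contradiction), or that the accumulated sequence of swaps can be assembled into a global Kempe-type recoloring which, combined with $vx\in E(G)$, gives a genuine $(k-1)$-coloring of $G$ itself, contradicting $\chi(G)=k$. Controlling this chain of swaps so that it terminates in one of these two outcomes is the part that I expect to be the main obstacle.
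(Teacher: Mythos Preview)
The paper does not give its own proof of this statement: it is quoted from~\cite{MS} and used as a black box in the proof of Theorem~\ref{thm-last}. There is thus no argument in the paper against which to compare your proposal.

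Regarding your outline itself, the opening observations are correct ($\chi(G-x)=k-1$, $vx\in E(G)$, and the swap when $|V_j|\geq 3$ or $|V_j|=1$), but the gap you flag in the $|V_j|=2$ case is genuine, and your proposed tie-breaker does not resolve it. If $V_1=\{v\}$ and $V_j=\{u,w\}$, then after moving $u$ into $V_1$ the new coloring has the size-$2$ class $\{u,v\}$ in place of $V_j$ and the singleton $\{w\}$ in place of the old singleton $\{v\}$: both the number of singletons and the number of size-$2$ classes are unchanged, so neither measure forces progress. Your ``chain of swaps'' therefore has no potential function driving it to terminate, and following an arbitrary non-edge path in $\overline{G}$ can simply oscillate (for instance $w$'s non-neighbor $u$ now sits in the size-$2$ class $\{u,v\}$, and swapping back restores the original coloring). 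To complete the argument you would need either a genuinely different extremal quantity or a structural use of the connectedness of $\overline{G}$ that goes beyond local recolorings.
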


Following some ideas in \cite{MM}, we can prove the following result, implying that Theorem \ref{thm4} holds.
\begin{theorem}\label{thm-last}
Theorems \ref{thm3.5} and \ref{thm4} are equivalent.
\end{theorem}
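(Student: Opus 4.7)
The plan is to prove Theorem~\ref{thm-last} by handling the easy direction directly and then inducting on $|V(G)|$ for the hard direction. For Theorem~\ref{thm4} $\Rightarrow$ Theorem~\ref{thm3.5}, the hypothesis $\alpha(G) \le 2$ forces $\chi(G) \ge \lceil n/2 \rceil$ (every color class has size at most $2$), so $K^\ell_{\ell,\chi(G)-\ell}$ contains $K^\ell_{\ell,\lceil n/2\rceil-\ell}$ as a spanning subgraph, giving the implication immediately.

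For the converse, assuming Theorem~\ref{thm3.5}, I would induct on $|V(G)|$. Using standard reductions I may assume $\alpha(G) = 2$ (the case $\alpha(G) = 1$ being trivial since $G$ is then complete) and that $G$ is $k$-vertex-critical with $k = \chi(G)$. If $G$ is anti-connected, Theorem~\ref{thmMS} combined with $\alpha(G) \le 2$ forces every color class of a $(k-1)$-coloring of $G - x$ to have size exactly $2$, yielding $|V(G)| = 2k - 1$; Theorem~\ref{thm3.5} then produces $G \succeq_m K^\ell_{\ell, k - \ell}$ directly. Otherwise $G$ admits a join decomposition $G = H_1 + \cdots + H_m$ into anti-connected factors, each $H_i$ being $k_i$-vertex-critical with $\sum k_i = k$. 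If some $H_i$ is a single vertex $v$, then $v$ is universal in $G$ and $\chi(G - v) = k - 1$; the inductive hypothesis applied to $G - v$ gives $G - v \succeq_m K^{\ell - 1}_{\ell - 1, k - \ell}$ (the edge case $\ell = 1$ is immediate from $\delta(G) \ge k - 1$), and adjoining $v$ to the clique side produces the required $K^\ell_{\ell, k - \ell}$.

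The crux of the argument is the remaining case: every $H_i$ is anti-connected with $|V(H_i)| \ge 3$, so each $H_i$ has a non-edge. Picking non-adjacent pairs $v_1, v_2 \in V(H_1)$ and $w_1, w_2 \in V(H_2)$, I would contract the two disjoint cross edges $v_1 w_1$ and $v_2 w_2$ to obtain a smaller graph $G'$. A direct verification shows both contracted vertices are universal in $G'$ and adjacent to each other, so
\[
G' = K_2 + (H_1 - \{v_1, v_2\}) + (H_2 - \{w_1, w_2\}) + H_3 + \cdots + H_m.
\]
Because $v_1, v_2$ are non-adjacent in the $k_1$-vertex-critical graph $H_1$, a short argument gives $\chi(H_1 - \{v_1, v_2\}) = k_1 - 1$ (otherwise a $(k_1 - 2)$-coloring of $H_1 - \{v_1, v_2\}$ would extend to a $(k_1 - 1)$-coloring of $H_1$ by giving $v_1, v_2$ a common new color, contradicting $\chi(H_1) = k_1$), and symmetrically $\chi(H_2 - \{w_1, w_2\}) = k_2 - 1$. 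Consequently $\chi(G') = k$ and $|V(G')| = |V(G)| - 2$, so the inductive hypothesis applied at the same chromatic number yields $G' \succeq_m K^\ell_{\ell, k - \ell}$, whence $G \succeq_m K^\ell_{\ell, k - \ell}$.

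The main obstacle is precisely this last case. A single cross-edge contraction would reduce $\chi$ by $1$, leaving the induction short of one clique vertex, while the naive "join of smaller minors" strategy of combining $H_i \succeq_m K^{\ell_i}_{\ell_i, k_i - \ell_i}$ via the join (and deleting the bipartite edges between the two independent sides) requires a splitting $\ell_1 + \ell_2 = \ell$ with $2\ell_i \le k_i$; this breaks down exactly when $\sum_i \lfloor k_i/2 \rfloor < \ell$, which occurs when $\ell$ is near $k/2$ and several of the $k_i$ are odd. The double cross-contraction is tuned to overcome this: the new $K_2$ of universal vertices restores $\chi$ to exactly $k$, precisely compensating the drop of $1$ in each of $\chi(H_1)$ and $\chi(H_2)$ caused by removing a non-adjacent pair, which is what lets the vertex-count induction close.
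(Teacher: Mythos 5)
Your proposal is correct, and it uses the same decisive trick as the paper --- contracting two disjoint cross edges $v_1w_1$, $v_2w_2$ between a non-adjacent pair in one join factor and a non-adjacent pair in another, so that the two contracted vertices become universal and mutually adjacent, while $\chi$ drops by exactly $1$ in each factor by vertex-criticality --- but you deploy it differently, and your version is cleaner. The paper first tries to split $\ell$ as $\ell_1+\ell_2$ with $2\ell_i\le\chi(G_i)$ and combine $K^{\ell_i}_{\ell_i,\chi(G_i)-\ell_i}$-minors of the two factors; only when this fails (forcing $\chi(G)=2\ell$ with both $\chi(G_i)$ odd) does it remove the two non-adjacent pairs, combine $K^{\ell_i}_{\ell_i,\ell_i}$-minors of the reduced factors, and restore two universal vertices by the cross contractions. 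You instead apply the vertex-count induction to the \emph{whole} contracted graph $G'$, for which $\chi(G')=\chi(G)$ and $|V(G')|=|V(G)|-2$, which makes the step uniform and eliminates the paper's parity case analysis entirely; the price is that you must handle the universal-vertex factor separately (which you do, and which the paper's two-block formulation absorbs into its "some $G_i$ is a clique" case). Your verification that $\chi(H_i-\{v_i^1,v_i^2\})=\chi(H_i)-1$ for a non-adjacent pair in a vertex-critical factor, and the resulting $\chi(G')=k$, is exactly right, as is the front end (the $|V(G)|\le\alpha\chi$ bound for the easy direction, and Theorem \ref{thmMS} plus $\alpha\le 2$ forcing $|V(G)|=2\chi(G)-1$ in the anti-connected case, where Theorem \ref{thm3.5} finishes). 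One immaterial slip: an anti-connected factor of the join decomposition may have exactly two (non-adjacent) vertices, so "$|V(H_i)|\ge 3$" should read "$H_i$ contains a non-edge", which is all your argument uses.
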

\begin{proof}
Since $|V(G)|\leq\alpha(G)\chi(G)$ for any graph $G$, Theorem \ref{thm4} implies Theorem \ref{thm3.5}. Hence, it suffices to show that when Theorem \ref{thm3.5} holds, so does Theorem \ref{thm4}. Assume not.
Let $G$ be a counterexample to Theorem \ref{thm4} with $|V(G)|$ as small as possible. Then there exists an integer $\ell$ with $2\ell \leq \chi(G)$ such that $G$ contains no $K^{\ell}_{\ell,\chi(G)- \ell}$-minor. Since $\Delta(G)\geq \chi(G)-1$ implies that $G$ contains a $K^{1}_{1,\chi(G)-1}$-minor, we have $\ell\geq2$. 

We claim that $G$ is $\chi(G)$-vertex-critical. Assume not. Then $\chi(G-x)=\chi(G)$ for some vertex $x\in V(G)$. By the minimality choice of $G$, we have $$G-x \succeq_{m} K_{\ell',\chi(G-x)- \ell'}^{\ell'}=K_{\ell',\chi(G)- \ell'}^{\ell'}$$ for any positive integer $\ell'$ with $2\ell'\leq\chi(G-x)=\chi(G)$, which is a contradiction. Hence, $G$ is $\chi(G)$-vertex-critical.

When $2\chi(G)-1\leq|V(G)|\leq 2\chi(G)$, it follows from the fact $\chi(G)\geq\frac{|V(G)|}{2}$ that $\chi(G)=\lceil\frac{|V(G)|}{2}\rceil$, so $G$ is not  a counterexample to Theorem \ref{thm4} by Theorem \ref{thm3.5}, which is a contradiction. 
Hence, $|V(G)|\leq 2\chi(G)-2$ as $|V(G)|\leq 2\chi(G)$. Moreover, since $G$ is $\chi(G)$-vertex-critical, $G$ is not anti-connected by Theorem \ref{thmMS}.
Then there is a partition $(V_1,V_2)$ of $V(G)$ such that $V_1$ is complete to $V_2$. Set $G_1:=G[V_1]$ and $G_2:=G[V_2]$. Then $\chi(G)=\chi(G_1)+\chi(G_2)$.
By the minimality choice of $G$, we have $G_1 \succeq_{m} K_{\ell_1,\chi(G_1)- \ell_1}^{\ell_1}$ and
$G_2 \succeq_{m} K_{\ell_2,\chi(G_2)- \ell_2}^{\ell_2}$
for any positive integers $\ell_1$, $\ell_2$ with $2\ell_1\leq \chi(G_1)$ and $2\ell_2\leq \chi(G_2)$. Hence, when $G_1$ or $G_2$ is a clique, obviously we have $G \succeq_{m}K_{\ell,\chi(G)- \ell}^{\ell},$ a contradiction. So  neither $G_1$ nor $G_2$ is a clique. Moreover, when there are positive integers $\ell_1$, $\ell_2$ with $2\ell_1\leq \chi(G_1)$, with $2\ell_2\leq \chi(G_2)$, and with $\ell=\ell_1+\ell_2$, we have $$G \succeq_{m} K^{{\ell_1+\ell_2}}_{\ell_1+\ell_2,\chi(G_1)+\chi(G_2)- (\ell_1+\ell_2)}=K_{\ell,\chi(G)- \ell}^{\ell},$$ which is a contradiction. 
Hence, such $\ell_1$, $\ell_2$ do not exist, implying that $\chi(G)=2\ell$, and both $\chi(G_1)$ and $\chi(G_2)$ are odd.

Set $\ell_1:=\frac{\chi(G_1)-1}{2}$ and $\ell_2:=\frac{\chi(G_2)-1}{2}$. Then $\ell=\ell_1+\ell_2+1$. Since $V_1$ is complete to $V_2$, and $G$ is $\chi(G)$-vertex-critical, $G_i$ is $\chi(G_i)$-vertex-critical for each integer $1\leq i\leq2$. Moreover, since $G_i$ is not a clique, 
there are two nonadjacent vertices $u_i,v_i\in V(G_i)$ 
such that $$\chi(G_i\backslash\{u_i,v_i\})=\chi(G_i-u_i)=\chi(G_i-v_i)=\chi(G_i)-1=2\ell_i.$$ By the minimality choice of $G$, we have $G_i\backslash\{u_i,v_i\} \succeq_{m} K_{\ell_i, \ell_i}^{\ell_i}$. 
Then $G\backslash\{u_1,v_1,u_2,v_2\} \succeq_{m} K_{\ell-1,\ell-1}^{\ell-1}$ as $V_1$ is complete to $V_2$. Moreover, since $N_G[\{u_1,u_2\}]=N_G[\{v_1,v_2\}]=V(G)$, adding the two new vertices obtained by contracting edges $u_1u_2$ and $v_1v_2$ to a $K_{\ell-1,\ell-1}^{\ell-1}$-minor of $G\backslash\{u_1,v_1,u_2,v_2\}$, we obtain a $K_{\ell,\chi(G)- \ell}^{\ell}$-minor of $G$ as $\chi(G)=2\ell$, which is a contradiction.
\end{proof}

For graphs with independence number two, following a similar and simpler way as the proof of Theorem \ref{thm-last}, we can also prove that Conjecture {\rm\ref{conj1w}} is equivalent to Conjecture {\rm\ref{conj4}} for graphs with independence number two.

\section{Acknowledgments}
This research was partially supported by grants from the National Natural Sciences
Foundation of China (No. 11971111).
We would like to express our gratitude to two anonymous reviewers for their diligent review and valuable suggestions, which significantly enhanced the clarity and presentation of this paper.

	
	
	\vspace{5pt}

\end{document}